\begin{document} 
\newtheorem{theorem}{Theorem}[section]
\newtheorem{lemma}{Lemma}[section]
\newtheorem{conjecture}{Conjecture}[section]
\newtheorem{definition}{Definition}[section]
\newtheorem{remark}{Remark}[section]
\newtheorem{claim}{Claim}[section]
\newcommand{\oh}{\left(\frac{1}{2}\right)}
\newcommand{\of}{\left(\frac{1}{4}\right)}
\newcommand{\nt}{\not\to}
\newcommand{\lra}{\leftrightarrow}
\newcommand{\p}{^\prime}
\newcommand{\mbf}{\mathbf}
\newcommand{\ol}{\overline}
\newcommand{\ora}{\overrightarrow}

\title{Path correlations in a randomly oriented complete bipartite graph}
\author{Erik Aas}
\date{}

\abstract{In a randomly oriented graph containing vertices $x$ and $y$, denote by $\{x\to y\}$ the event that there is a directed path from $x$ to $y$. We study the correlation between the events $\{x\to y\}$ and $\{y\to z\}$ for a (large) oriented complete bipartite graph with orientation chosen uniformly at random. We classify the cases of positive and negative correlation respectively in terms of the relative proportions of the sizes of the color classes of the graph.}

\maketitle
\section{Introduction}

Let $G$ be an aribitrary finite graph whose edges $e$ all have been assigned probabilities $p(e)$. We get a random graph by including the edge $e$ in the graph with probability $p(e)$ independently of everything else. Let $\{x \lra y\}$ denote the event that there is a path connecting the two nodse $x$ and $y$ in this random graph.

In \cite{mcdiarmid} it was observed that when all $p(e)=\frac{1}{2}$, the probability $P(x\lra y)$ coincided with the probability of $\{x\to y\}$, the event of there being a directed path from $x$ to $y$ in a uniformly chosen random orientation of the edges of $G$ (so here we are comparing the probabilities of two different events in two different probability spaces). Clearly choosing an orientation uniformly at random amounts to orient each edge either way with equal probability $1/2$, independently of all other edges.

A natural question, then, is what other properties those two probability spaces share.

It is a basic fact \cite{harris} that in the undirected random graph model defined above, for any increasing events $A$ and $B$ (like $\{x\lra z\}$ and $\{z\lra y\}$),
$$P(A\cap B) \geq P(A)P(B),$$
that is, $A$ and $B$ are positively (or, to be pedantic, nonnegatively) correlated.

An analogue of this fact, stated in \cite{mcdiarmid}, is that in any {\it randomly directed} graph, the events $\{x\to y\}$ and $\{y\to z\}$ are positively correlated, that is,
$$
P(x\to y \to z) \geq P(x\to y)P(x\to z).
$$

This motivates the following question:
{\it Let $G$ be a graph containing three vertices $x$,$y$,$z$. Does 
$$P(x \to y \to z) \geq P(x\to y)P(y\to z)$$
hold, that is, are $\{x\to y\}$ and $\{y \to z\}$ positively correlated?} Here, and in the following, we write $\{x\to y\to z\} := \{x\to y\}\cap\{y\to z\}$.

Obviously this depends on the graph $G$, as it is easy to find graphs for which $P(x\to y\to z)-P(x\to y)P(y\to z)$ has any given sign (including $0$), and a simple characterisation of all graphs with, say, positive correlation seems hard to find.
It is known \cite{seasl09} that for the complete graph $K_n$, this quantity is negative for $n=3$, zero for $n=4$ and positive for all $n>4$. For a slightly different model, in \cite{gnp} it is shown that when the graph is 'dense', the analogous correlation is positive.
Here we will study the same correlation between the events $\{x\to y\}$ and $\{y\to z\}$ in a uniformly chosen orientation of the edges of the complete bipartite graph $K_{m,n}$.

\section{Result}

Throughout the remainder of this note, let $A = \{x\to y\}$ and $B = \{y\to z\}$. We denote the complement of a set (or an event) $A$ by $A^c$. We define $\{x\nt y\}$ to be $\{x\to y\}^c$, that is, there is not path from $x$ to $y$.
As a technical convenience the object of study will be $RC_{m,n} := \frac{P(A^c\cap B^c) - P(A^c)P(B^c)}{P(A^c\cap B^c)}$, {\it the relative covariance between $A^c$ and $B^c$}, rather than $P(A\cap B)-P(A)P(B)$.
Observe that $P(A\cap B)-P(A)P(B) = P(A^c \cap B^c)-P(A^c)P(B^c)$ - this holds for any two events $A$, $B$.
In particular, $A$ and $B$ are positively correlated if and only if $RC_{m,n}$ is positive.
Observe that the relative covariance can be rewritten as (and this might be the more convenient way of thinking about it) $RC_{m,n} = 1 - \frac{P(A^c)P(B^c)}{P(A^c \cap B^c)} = 1 - \frac{P(B^c)}{P(B^c | A^c)}$.

We fix some more notation:
\begin{itemize}
\item The nodes of $K_{m,n}$ are partitioned into two sets $X$ and $Y$ of sizes $m$ and $n$ respectively.
\item $m = \lfloor \beta n \rfloor$ for some fixed positive constant $\beta$.
\item $m,n\geq 2$.
\item $a,b,c,d$ are four distinct vertices of $K_{m,n}$; the first three belong to $X$ and $d$ belongs to $Y$.
\item The limit $\lim_{n\to\infty} RC_{m,n}$ is denoted by $RC$.
\end{itemize}

\begin{theorem}
\label{main}
The value of $RC$ is given by the following table.

\begin{table}[hb]
\begin{tabular}{r|r||r|r|r}
$X$& $Y$& $\beta<1$& $\beta=1$& $\beta>1$\\
\hline
&\\[-10pt]
$x,y,z$& & -1/3&-1/3&-1/3\\
\hline
$x,y$&$z$& 1/2& 1/5&-1\\
\hline
$x,z$&$y$&  1& 1/5& 0\\
\hline
\end{tabular}

\caption{The relative covariance between $\{x\nt y\}$ and $\{y\nt z\}$, according to which partition the vertices belong to, and to the proportion $\beta$ of the number $X$-vertices to the number of $Y$-vertices.}
\end{table}
\end{theorem}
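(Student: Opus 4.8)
The plan is to reduce Theorem~\ref{main} to computing, in each of the three cases and for each regime of $\beta$, the first order asymptotics of $P(A^c)$, $P(B^c)$ and $P(A^c\cap B^c)$ as $n\to\infty$, and then to read off $RC=1-\lim P(A^c)P(B^c)/P(A^c\cap B^c)$. The basic device is that for any two vertices $u,v$ one has $\{u\nt v\}=\{\,\mathrm{reach}(u)\not\ni v\,\}$, where $\mathrm{reach}(u)$ is the set of vertices reachable from $u$; this set is the unique minimal set containing $u$ that is \emph{closed}, i.e.\ has no edge pointing out of it. Hence $\{u\nt v\}=\bigcup_R\{R\text{ is closed}\}$, the union over all $R$ with $u\in R\not\ni v$, and for a \emph{fixed} $R$ the event $\{R\text{ closed}\}$ has probability $2^{-|\partial R|}$, where $\partial R$ is the set of edges with exactly one endpoint in $R$ (all of which must point inward). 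Thus $P(u\nt v)$ is squeezed between $\max_R 2^{-|\partial R|}$ and $\sum_R 2^{-|\partial R|}$. Likewise $A^c\cap B^c=\bigcup_{R_1,R_2}\{R_1\text{ closed}\}\cap\{R_2\text{ closed}\}$, where now $R_1\ni x$, $R_1\not\ni y$, $R_2\ni y$, $R_2\not\ni z$; the joint event has probability $2^{-|\partial R_1\cup\partial R_2|}$ when the two ``inward'' prescriptions agree on $\partial R_1\cap\partial R_2$, and $0$ otherwise.

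First I would settle the single--event asymptotics. In $K_{m,n}$ a vertex of $X$ has degree $n$ and a vertex of $Y$ has degree $m$, so the cheapest closed sets are obtained by turning one vertex into a local source or a local sink, contributing terms of size $2^{-n}$ or $2^{-m}$; which of these dominates depends on which colour class the relevant vertex lies in and on the sign of $\beta-1$. One then checks that every other closed set $R$ with $u\in R\not\ni v$ has $|\partial R|$ strictly larger, by an amount that grows with $\min(m,n)$, so that (since $m,n\to\infty$) all remaining terms are of lower order; consequently each of $P(A^c)$ and $P(B^c)$ has the shape $(c+o(1))\,2^{-\ell}$ for an explicit small integer $c$ and an explicit exponent $\ell$. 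The all--in--$X$ case is the cleanest, since all three vertices then have degree $n$ and the whole computation is $\beta$--independent: one gets $P(A^c)\sim P(B^c)\sim 2\cdot 2^{-n}$ (first vertex a local sink, or second vertex a local source).

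The heart of the proof, and the step I expect to be the main obstacle, is the joint probability $P(A^c\cap B^c)$, where the two events genuinely interact. One cannot simply multiply: the cheapest configuration forcing $A^c$ and the cheapest one forcing $B^c$ are usually \emph{incompatible} as orientations (for instance ``$y$ is a local source'' already implies $y\to z$, hence destroys $B^c$, and ``$y$ a local source'' directly conflicts with ``$y$ a local sink''), so one must hunt for the genuinely cheapest \emph{compatible} pair $(R_1,R_2)$ and enumerate \emph{all} pairs attaining the minimum of $|\partial R_1\cup\partial R_2|$. Moreover, conditioning on one cheap configuration can leave several of the edges needed by the other configuration already correctly oriented, so that two (or more) distinct minimal configurations appear where a careless count would see only one; this ``freeing'' phenomenon is what produces prefactors such as $\tfrac12$ and $\tfrac15$ rather than $0$ or $-\tfrac13$. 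So the concrete plan is: (i) classify the compatible pairs of closed sets in a complete bipartite graph --- in $K_{m,n}$ compatibility forces either $R_1\subseteq R_2$ or $R_1\cap X=R_2\cap X$, the two symmetric possibilities being ruled out here by $y\in R_2\setminus R_1$; (ii) minimise $|\partial R_1\cup\partial R_2|$ over all such pairs in each of the relevant (case, sign of $\beta-1$) situations; (iii) count the minimisers with multiplicity and verify that the inclusion--exclusion corrections between them are of lower order; and (iv) bound the total contribution of the non--minimal pairs (here one must be careful, since the \emph{number} of admissible pairs can itself be exponentially large, so the boundary--size gap has to beat the entropy of choosing the set). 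Combining this with the single--event estimates gives $RC$ as $1$ minus a ratio of explicit integers, which reproduces the table; in particular the sign of $RC$ in the last two rows flips at $\beta=1$ according to which colour class is the smaller one --- i.e.\ where the cheap source/sink events live. The boundary case $\beta=1$ must be treated separately: since the competing exponents $m$ and $n$ then coincide, several families of closed sets that were negligible for $\beta\neq 1$ now all contribute at top order, and sorting out this merger is what yields the common value $\tfrac15$ in the $\beta=1$ column.
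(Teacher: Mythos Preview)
Your closed--set (cut) framework is a natural alternative to the paper's method, and for the \emph{lower} bounds it reproduces exactly the same extremal configurations the paper uses. The gap is in the upper bound for the joint probability $P(A^c\cap B^c)$. Your plan is to squeeze this between an inclusion--exclusion over the minimising pairs $(R_1,R_2)$ and the full union bound $\sum_{(R_1,R_2)}2^{-|\partial R_1\cup\partial R_2|}$. But distinct compatible pairs can encode the \emph{same} orientation event, and this already happens at the minimum: in the case $x,y,z\in X$ (say $c\nt b\nt a$) one has, for instance,
\[
(R_1,R_2)=(\{c\},\{b\}),\quad (\{c\},\{b,c\}),\quad (\{c\},V\setminus\{a\}),\quad (V\setminus\{b\},V\setminus\{a\}),\quad (V\setminus\{a,b\},V\setminus\{a\}),
\]
all with $|\partial R_1\cup\partial R_2|=2n$, yet they represent only three distinct events (``$b,c$ both sinks'', ``$c$ sink and $a$ source'', ``$a,b$ both sources''). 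Hence the union bound gives at least $5\cdot 2^{-2n}$ at leading order, whereas the truth is $3\cdot 2^{-2n}$; the squeeze does not close, and since $RC$ is determined precisely by these leading constants, the argument as stated cannot recover the table. Restricting to $R_i=\mathrm{reach}(\cdot)$ would remove the redundancy, but then $P(\mathrm{reach}(u)=R)$ is no longer simply $2^{-|\partial R|}$ and you are back to a nontrivial computation.

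The paper sidesteps this entirely: its upper bounds are not union bounds over cuts but probabilities of the event ``no path of length $\le L$'' (with $L=3$ or $4$), obtained by conditioning on the one--step out-- and in--neighbourhoods $O_\bullet,I_\bullet$ of the relevant vertices and then splitting the resulting sums according to whether those neighbourhood sizes are below or above a threshold $\alpha$. That decomposition is what produces matching upper and lower constants. A smaller point: your compatibility classification (``$R_1\subseteq R_2$ or $R_1\cap X=R_2\cap X$'') is only the right dichotomy when $y\in Y$; when $y\in X$ the surviving options are $R_1\subseteq R_2$ or $R_1\cap Y=R_2\cap Y$.
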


We see that letting $m = \lfloor \beta n\rfloor$ for a fixed constant $\beta$ is not as restrictive as might seem at first thought.

\section{Proof}

The proof of Theorem \ref{main} will follow from a number of lemmas estimating the probabilities $P(A^c), P(B^c), P(A^c\cap B^c)$ in terms of $n$.
A common feature of these estimates is that the lower bounds, which are trivial to obtain, are close to the harder-to-prove upper bounds.
If $S$ and $T$ are two disjoint sets of vertices in $K_{m,n}$, an '$ST$-witness' is defined to be a vertex $u$ for which there is at least one edge from $S$ to $u$ and at least one edge from $u$ to $T$.

For a vertex $s$ in $K_{m,n}$, the set $O_s$ will loosely be defined as the set of vertices in $X^\prime\cup Y^\prime$ (thus in $X^\prime$ if $a\in Y$ and in $Y^\prime$ if $a\in X$) which can be reached in exactly one step from $s$, $X^\prime$ and $Y^\prime$ being defined separately in each section where this notation is used.
We denote $|X^\prime|$ and $|Y^\prime|$ by $m^\prime$ and $n^\prime$.
The set $I_a$ of vertices which reach $a$ in exactly one step is similarly defined.

Estimating these probabilities will be a lot of repetitive work.
The following inequality will be used several times: if $s$, $t \geq \alpha$, then $st \geq \alpha s + \alpha t - \alpha^2$.

Below, when summing over subsets of nodes denoted by upper case letters, the sizes of these sets will often be denoted by the corresponding lower case letters.

To estimate sums of the form $\sum _{s = 0} ^n \sum_{t=0} ^{n-s} {n\choose s} {n-s \choose t} \oh ^{st}$ we will split them into several parts according to whether $s\geq \alpha$ or $t \geq \alpha$ for some suitably chosen constant $\alpha$ (depending only on $\beta$).

\subsection*{(i) $P(b\not\to a)$}

\begin{lemma}
\label{ba}
$$
P(b\nt a) \sim 2\oh^n.
$$
\end{lemma}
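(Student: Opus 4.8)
The plan is to trap $P(b\nt a)$ between a lower bound and an upper bound that are both asymptotic to $2\oh^n$; the lower bound is immediate and the real work is the upper bound.

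For the lower bound, note that $\{b\nt a\}$ contains the event $E_b$ that all $n$ edges at $b$ are oriented towards $b$ (so that $b$ has out-degree $0$ and reaches no other vertex), and the event $E_a$ that all $n$ edges at $a$ are oriented away from $a$ (so that $a$ has in-degree $0$ and is reached by no vertex). Since $a,b\in X$ there is no edge between them, so $E_a$ and $E_b$ depend on disjoint sets of $n$ edges each and are independent; hence
$$
P(b\nt a)\ \ge\ P(E_a\cup E_b)\ =\ 2\oh^n-\oh^{2n}\ \sim\ 2\oh^n .
$$

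For the upper bound I would condition on the one-step out-neighbourhood $O_b\subseteq Y$ of $b$ and the one-step in-neighbourhood $I_a\subseteq Y$ of $a$, writing $s=|O_b|$ and $t=|I_a|$. On the event $\{b\nt a\}$ these two sets must be disjoint, since a common vertex $y$ would give the length-two path $b\to y\to a$; in particular $s+t\le n$. Prescribing $O_b$ and $I_a$ exactly is the same as prescribing the orientations of the $2n$ edges incident to $a$ or to $b$, so each admissible pair of given sizes has probability $\oh^{2n}$, and there are $\binom ns\binom{n-s}t$ of them. Next, if some vertex $u\in X\setminus\{a,b\}$ is an $O_bI_a$-witness, then $b\to y_1\to u\to y_2\to a$ for suitable $y_1\in O_b$ and $y_2\in I_a$, so $b\to a$; hence, conditionally on $O_b$ and $I_a$, the event $\{b\nt a\}$ is contained in the event that none of the $m-2$ vertices of $X\setminus\{a,b\}$ is such a witness. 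The witness indicators of distinct vertices use disjoint edge sets and are independent of the conditioning, and each vertex is a witness with probability $(1-\oh^s)(1-\oh^t)$. Separating off the $s=0$ and $t=0$ contributions --- which together equal exactly $2\oh^n-\oh^{2n}$ --- we get
$$
P(b\nt a)\ \le\ 2\oh^n-\oh^{2n}+\oh^{2n}\sum_{s=1}^{n-1}\sum_{t=1}^{n-s}\binom ns\binom{n-s}t\bigl[1-(1-\oh^s)(1-\oh^t)\bigr]^{m-2},
$$
so the lemma follows once $\oh^{\,n}$ times the double sum is shown to tend to $0$.

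To estimate the double sum I would split it according to the sizes of $s$ and $t$, playing off two elementary bounds for the bracketed factor. Since $1-\oh^s\ge\oh$ whenever $s\ge1$, one has $1-(1-\oh^s)(1-\oh^t)\le\oh\bigl(1+\oh^{\max(s,t)}\bigr)\le 3/4$, so that the $(m-2)$-nd power is at most $(3/4)^{m-2}$; as $m-2\sim\beta n$, this already defeats the total binomial weight $\sum_{s,t}\binom ns\binom{n-s}t=3^n$. On the other hand, when $\min(s,t)$ is large the alternative bound $1-(1-\oh^s)(1-\oh^t)\le\oh^s+\oh^t\le 2\oh^{\min(s,t)}$ makes the factor super-exponentially small. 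Choosing a threshold (a constant $\alpha=\alpha(\beta)$, or something like $\log_2 m$) and splitting into the cases ``both of $s,t$ below it'', ``exactly one above it'', ``both above it'', a short computation shows that each part of the sum, multiplied by $\oh^n$, tends to $0$; in the middle case one uses that $\oh^{m-2}\bigl(1+\oh^{\max(s,t)}\bigr)^{m-2}=O\bigl(\oh^{m-2}\bigr)$ once $\max(s,t)$ exceeds $\log_2 m$. (If one instead needs to exploit the product structure of such a sum, the inequality quoted before, $st\ge\alpha s+\alpha t-\alpha^2$ on $s,t\ge\alpha$, factorises the exponential weight and reduces matters to a sum of type $(1+2\oh^\alpha)^n$, which is why $\alpha$ must be chosen large in terms of $\beta$.) I expect the main obstacle to be exactly the thin regime in which one of $s,t$ is small but positive --- the corresponding one-step neighbourhood being nonempty but tiny --- where the witness factor is not itself small and one must check that the exponent $m-2\sim\beta n$ in $(3/4)^{m-2}$ really does outweigh the $2^n$ or $3^n$ coming from the binomial coefficients for every $\beta>0$; in particular the cruder bound $1-(1-\oh^s)(1-\oh^t)\le\oh^s+\oh^t$, used by itself, is not enough once $\beta\ge1$. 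Once the double sum is controlled, the upper and lower bounds coincide asymptotically and $P(b\nt a)\sim 2\oh^n$.
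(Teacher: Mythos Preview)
Your approach coincides with the paper's: the same lower bound via $E_a\cup E_b$, the same upper bound by conditioning on $(O_b,I_a)$ and forbidding $O_bI_a$-witnesses among the $m-2$ vertices of $X\setminus\{a,b\}$ (the paper writes the bracket as $\oh^s+\oh^t-\oh^{s+t}$, which is your $1-(1-\oh^s)(1-\oh^t)$), and the same splitting of the residual double sum by a constant threshold $\alpha=\alpha(\beta)$.

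Two small corrections to your write-up. First, the remark that $(3/4)^{m-2}$ ``already defeats the total binomial weight $3^n$'' is false for small $\beta$: one has $\oh^{2n}\cdot 3^n\cdot(3/4)^{m-2}=(3/2)^n(3/4)^{m-2}\oh^n$, and this is $o\bigl(\oh^n\bigr)$ only when $\beta>\log(3/2)/\log(4/3)\approx 1.41$; so the splitting is essential, not optional. Second, with a \emph{constant} threshold $\alpha$ the factor $(1+\oh^{\max(s,t)})^{m-2}$ is not $O(1)$ in the middle case; what actually works (and what the paper uses) is that the whole base $\oh(1+\oh^{\alpha})=\oh+\oh^{\alpha+1}$ is strictly less than $1$ for $\alpha\ge 1$, so its $(m-2)$-nd power decays exponentially and absorbs the polynomial $n^\alpha$ coming from the few choices of the small coordinate.
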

\begin{proof}
Let $X^\prime = X-\{a,b\}$, $Y^\prime=Y$.

A lower bound is given by
$P(b\not\to a) \geq 
P(\{\text{there is no edge directed away from } b\} \cup \{\text{there is no edge directed towards}a\}) = 
2\oh^{n}-\oh^{2n}$, by inclusion-exclusion.
By calculating the probability that there is no path from $b$ to $a$ of length at most $4$, we get the following upper bound: $P(b\not\to a) = \sum_{S,T\subseteq Y} P(b\nt a|O_b = S, I_a = T)P(O_b=S,I_a=T) \leq
\oh^{2n}\sum_{S,T\subseteq Y: S\cap T=\emptyset} P(\text{no } x\in X^\prime\text{ is an } ST\text{-witness}) =
\oh^{2n}\sum_{s=0}^n\sum_{t=0}^{n-s}{n\choose s}{n-s\choose t}\left(\oh^s+\oh^t-\oh^{s+t}\right)^{m-2}$.

Note that the partial sum corresponding to $st=0$ is equal to the lower bound.
We now show that the other terms sum to $o(\oh^{n})$.
Split the remaining sum into the following four parts: 
$S_1$: $s$, $t  \geq \alpha$;
$S_2$: $1\leq s \leq \alpha \leq t$;
$S_3$: $1\leq t \leq \alpha \leq s$;
$S_4:1\leq s,t\leq \alpha$.

Note that in the $S_1$ case, $\oh^s+\oh^t-\oh^{s+t}\leq \oh^{\alpha-1}$.
Hence $S_1 \leq \oh^{2n}\sum_{s=\alpha}^n{n\choose s}\sum_{t=\alpha}^{n-s}{n-s\choose t} \oh^{(\alpha-1)(m-2)} =
\oh^{2n+(\alpha-1)(m-2)}3^n = o(\oh^{(\alpha-1)(m-2)})=o(\oh^{n})$, the last equality holding when choosing $\alpha$ large enough.

$S_2 \leq 
\oh^{2n}\sum_{s=1}^\alpha{n\choose s}\sum_{t=\alpha}^{n-s}{n-s\choose t}\left(\oh^s+\oh^t\right)^{m-2}\leq
\oh^{2n}\sum_{s=1}^\alpha n^\alpha \sum_{t=0}^n {n\choose t}\left(\oh+\oh^\alpha\right)^{m-2}\leq
\oh^n\alpha n^\alpha \left(\oh+\oh^\alpha\right)^{m-2}=o(\oh^n)$, if $\alpha>0$.

By symmetry, we may choose $\alpha$ possbily even larger so that $S_3 = o(\oh^{n})$ holds.

Clearly, $S_4 = o(\oh^n)$.

Hence $P(b\nt a) - 2\oh^n \leq S_1+S_2+S_3+S_4 = o(\oh^{n})$.
\qedhere
\end{proof}

\subsection*{(ii) $P(d\nt a)$}
\begin{lemma}
\label{da}
$P(d\nt a) \sim \oh^{m} + \oh^{n}$. (This is $\sim \oh^{m}$ for $\beta < 1$, $\sim 2\oh^{n}$ for $\beta = 1$, and $\sim \oh^n$ for $\beta > 1$.)
\end{lemma}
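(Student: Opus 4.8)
The plan is to trap $P(d\nt a)$ between two quantities, both asymptotic to $\oh^m+\oh^n$, much as was done for Lemma~\ref{ba}. Write $X^\prime=X-\{a\}$, $Y^\prime=Y-\{d\}$, so $m^\prime=m-1$, $n^\prime=n-1$, and let $O_d=\{x\in X^\prime:d\to x\}$ and $I_a=\{y\in Y^\prime:y\to a\}$. For the lower bound I would apply inclusion--exclusion to the two events ``every edge at $d$ points into $d$'' and ``every edge at $a$ points out of $a$'', each of which already forces $d\nt a$. Their probabilities are $\oh^m$ and $\oh^n$; since both require the common edge $\{a,d\}$ to be oriented $a\to d$, their intersection prescribes the orientations of $m+n-1$ edges and so has probability $\oh^{m+n-1}$. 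Hence $P(d\nt a)\geq\oh^m+\oh^n-\oh^{m+n-1}\sim\oh^m+\oh^n$.

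For the upper bound I would condition on the orientation of $\{a,d\}$ and on the pair $(O_d,I_a)$, noting that $d\nt a$ forces $d\not\to a$ in one step (so $a\to d$) and in three steps. When $d\to a$ there is nothing to do. When $a\to d$, $O_d=S$, $I_a=T$, this configuration prescribes exactly the $m+n-1$ edges meeting $a$ or $d$, so it has probability $\oh^{m+n-1}$, and conditionally the $|S|\,|T|$ edges between $S$ and $T$ remain fair and independent; any such edge $s\to t$ produces the path $d\to s\to t\to a$, so on $d\nt a$ all of them point from $T$ to $S$, an event of conditional probability $\oh^{|S|\,|T|}$. Summing over $S,T$ gives
$$P(d\nt a)\leq\oh^{m+n-1}\sum_{s=0}^{m-1}\sum_{t=0}^{n-1}\binom{m-1}{s}\binom{n-1}{t}\oh^{st},$$
whose $st=0$ part equals $\oh^{m+n-1}\bigl(2^{n-1}+2^{m-1}-1\bigr)=\oh^m+\oh^n-\oh^{m+n-1}$, matching the lower bound.

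It then remains to show the tail $E:=\oh^{m+n-1}\sum_{s\geq1}\binom{m-1}{s}\bigl[(1+\oh^s)^{n-1}-1\bigr]$ is $o(\oh^m+\oh^n)$. Here, unlike in Lemma~\ref{ba}, a constant cutoff does not seem to suffice, and I would split the $s$-sum at $s_0=\lceil\log_2 n\rceil+1$. For $1\leq s<s_0$ the factor $(1+\oh^s)^{n-1}-1$ is at most $(3/2)^{n-1}$ while $\sum_{s<s_0}\binom{m-1}{s}$ is only quasi-polynomial, $2^{O(\log^2 n)}$, so this part of $E$ is $\oh^m(3/4)^{n-1}2^{O(\log^2 n)}=o(\oh^m)$. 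For $s\geq s_0$ one has $(n-1)\oh^s<1/2$, hence $(1+\oh^s)^{n-1}-1\leq 2(n-1)\oh^s$, and since $\sum_s\binom{m-1}{s}\oh^s=(3/2)^{m-1}$ this part of $E$ is at most $2(n-1)\oh^n(3/4)^{m-1}=o(\oh^n)$. Combining, $E=o(\oh^m+\oh^n)$, so $P(d\nt a)\sim\oh^m+\oh^n$, and the three stated cases are read off by comparing $\oh^m$ with $\oh^n$ according to $\beta<1$, $\beta=1$, $\beta>1$.

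The one real obstacle is this last tail estimate. The delicate point is the interplay of the two factors: $\binom{m-1}{s}$ is large (up to $2^{m-1}$) only when $s$ is large, where $(1+\oh^s)^{n-1}-1$ is correspondingly tiny — and it is essential to keep the $-1$ there, since it is what remains of the restriction $T\neq\emptyset$ and without it the large-$s$ contribution would be merely $O(\oh^n)$ rather than $o(\oh^n)$. Letting the cutoff grow like $\log_2 n$ reconciles the two regimes; everything else is routine bookkeeping of the kind already carried out for Lemma~\ref{ba}.
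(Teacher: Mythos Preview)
Your argument is correct. The lower bound, the upper bound via ``no path of length $\leq 3$'', and the identification of the $st=0$ part with the lower bound are exactly as in the paper. The only difference is how the tail $\sum_{s\geq 1,\,t\geq 1}$ is handled: the paper keeps the double sum and splits it with a \emph{constant} threshold $\alpha$ into three pieces $S_1=\{s,t\geq\alpha\}$, $S_2=\{1\leq s\leq\alpha\}$, $S_3=\{1\leq t\leq\alpha\}$, using the inequality $st\geq\alpha s+\alpha t-\alpha^2$ for $S_1$ and the crude bound $\oh^{st}\leq\oh^{t}$ (resp.\ $\oh^{s}$) for $S_2$, $S_3$. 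You instead sum out $t$ first to get the one-variable sum $\sum_{s\geq 1}\binom{m-1}{s}\bigl[(1+\oh^{s})^{n-1}-1\bigr]$ and then split at the growing cutoff $s_0=\lceil\log_2 n\rceil+1$. Both routes give $E=o\bigl(\oh^{m}+\oh^{n}\bigr)$.

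Your remark that ``a constant cutoff does not seem to suffice'' is accurate for your one-dimensional reduction, but it is worth noting that this is an artefact of having summed over $t$: in the two-dimensional split the constant $\alpha$ works, and indeed the paper uses the same constant-$\alpha$ scheme uniformly across all five lemmas. What your approach buys is that it avoids the auxiliary inequality $st\geq\alpha s+\alpha t-\alpha^2$ and replaces the three-case split by a single cut, at the cost of a quasi-polynomial factor $2^{O(\log^2 n)}$ that has to be absorbed by $(3/4)^{n}$; the paper's approach buys uniformity with Lemmas~\ref{ba}, \ref{bda}, \ref{cba}, \ref{dba} and keeps every bound at the level ``polynomial $\times$ exponential''.
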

\begin{proof}
Let $X^\prime = X -\{a\}$, $Y^\prime =Y -\{d\}$.
The probability is bounded from below by $P(d\nt a) \geq P(\text{no edge leaves } d \text{ or no edge enters } a) \geq \oh^m+\oh^n-\oh^{m+n-1}$.

For the upper bound, we calculate the probability that there is no path from $d$ to $a$ of length at most $3$:
$P(d\nt a) = 
\sum_{S\subseteq Y^\prime, T\subseteq X^\prime}P(a\nt d|I_a =S,O_d=T)P(I_a=S,O_d=T)\leq
\oh^{m+n-1}\sum_{s=0}^{n-1}{n-1\choose s}\sum_{t=0}^{m-1}{m-1\choose t}P(\text{no edge from } S \text{ to } T) =
\oh^{m+n-1}\sum_{s=0}^{n-1}{n-1\choose s}\sum_{t=0}^{m-1}{m-1\choose t}\oh^{st}$.
The partial sum with $st=0$ equals the lower bound.
We now show that the remaining terms sum to $o(\oh^m+\oh^n)$ by splitting their sum into the following cases:
$S_1$: $s$, $t\geq \alpha$, 
$S_2$: $1\leq s\leq \alpha$, and
$S_3$: $1\leq t\leq \alpha$.

Using $s$, $t \geq \alpha \Rightarrow st \geq \alpha s + \alpha t - \alpha^2$,
$S_1 =
\oh^{m+n-1}\sum_{s=\alpha}^{n-1}{n-1\choose s}\sum_{t=\alpha}^{m-1}{m-1\choose t}\oh^{st}\leq
\oh^{m+n-1}\sum_{s=\alpha}^{n-1}{n-1\choose s}\sum_{t=\alpha}^{m-1}{m-1\choose t}\oh^{\alpha s+\alpha t-\alpha^2}\leq
\oh^{m+n-1}2^{\alpha^2}\sum_{s=0}^{n-1}{n-1\choose s}\oh^{\alpha s} \sum_{t=0}^{m-1}{m-1\choose t}\oh^{\alpha t}\leq
\oh^{m+n-1}2^{\alpha^2}\left(1+\oh^\alpha\right)^{m+n-2} = o(\oh^m+\oh^n)$, choosing $\alpha$ large enough. 
Similarly,
$S_2= \oh^{m+n-1}\sum_{s=1}^\alpha {n-1\choose s}\sum_{t=0}^{m-1}{m-1\choose t}\oh^{st} \leq
\oh^{m+n-1}(n-1)^\alpha\alpha \sum_{t=0}^{m-1} {m-1\choose t}\oh^t =
\alpha(n-1)^\alpha\oh^n\left(\frac{3}{4}\right)^{m-1} = o(\oh^m+\oh^n)$.

A similar argument shows $S_3 = o(\oh^m+\oh^n)$.

Hence $P(d\nt a) - (\oh^m+\oh^n) \leq S_1+S_2+S_3 = o(\oh^m+\oh^n)$.
\qedhere
\end{proof}

\subsection*{(iii) $P(b\nt d\nt a)$}

\begin{lemma}
\label{bda}
$P(b\nt d\nt a) \sim 2\oh^{m+n-1}+\oh^{2n}$
\end{lemma}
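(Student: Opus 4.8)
The strategy mirrors the two previous lemmas: produce an easy lower bound by inclusion–exclusion on a small family of ``obstruction'' events, then show the true probability exceeds this by only $o(\oh^{m+n-1}+\oh^{2n})$ via a short-path expansion and a case split on the sizes of the relevant vertex neighbourhoods.

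First I would fix the notation: set $X' = X - \{a,b\}$ and $Y' = Y - \{d\}$, with $m' = m-2$, $n' = n-1$. For the lower bound, observe that $\{b\nt d\nt a\}$ is implied by the union of the three events $E_1 = \{$no edge leaves $b\}$ (probability $\oh^n$), $E_2 = \{$no edge enters $a\}$ (probability $\oh^n$), and $E_3 = \{$no edge leaves $d$ and no edge enters $d\}$ (probability $\oh^{m+n-1}$, since $d$ has $m$ neighbours but the edge $\{b,d\}$... — actually $d$ is adjacent to all $m$ vertices of $X$, so $E_3$ has probability $\oh^{2m}$; care is needed here with exactly which edges are counted). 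The point is that, after inclusion–exclusion, the pairwise and triple intersections are of strictly smaller order, so the lower bound reads $2\oh^n + \oh^{2m} + O(\text{lower order})$. One must check this matches the claimed $2\oh^{m+n-1} + \oh^{2n}$ — I suspect the correct reading is that $b\nt d$ forces (to leading order) either no edge out of $b$ or no edge into $d$ contributing $\oh^n\cdot\oh^{?}$ terms, and the dominant obstructions are: no edge $b\to$ anything \emph{and} no edge $\to a$ (giving $\oh^{2n}$), or $d$ isolated-as-needed in a way that kills both paths (giving the $2\oh^{m+n-1}$); I would recompute these three leading contributions carefully from scratch, since getting the exponents right is the whole content of the leading-order claim.

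For the upper bound I would write $P(b\nt d\nt a) = \sum_{S,T,U} P(b\nt d\nt a \mid O_b = S, I_d = T, O_d = U, I_a = V)\,P(\cdots)$, where $O_b, I_a \subseteq Y'$ record one-step neighbourhoods in $Y$, and $I_d, O_d \subseteq X'$ record one-step in/out neighbourhoods of $d$ in $X$ (these four sets live in complementary edge-sets except for possible overlaps, which must be tracked). Conditioning on these, $\{b\nt d\}$ forces no $X'$-vertex to be an $(O_b, I_d)$-witness and no direct edge $b\to d$, while $\{d\nt a\}$ forces no $Y'$-vertex to be an $(O_d, I_a)$-witness and no $d\to a$; additionally $\{b\nt a\}$ is \emph{not} required, so there is no coupling between the two halves beyond the shared conditioning. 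Bounding each witness-absence probability by the now-familiar $(\oh^s + \oh^t - \oh^{s+t})$-type factors and summing the binomial coefficients, one gets a product of two sums each structurally identical to those in Lemmas \ref{ba} and \ref{da}.

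The main obstacle will be the bookkeeping of \emph{overlaps and shared randomness}: the edges between $b$ and $Y$, between $a$ and $Y$, and between $d$ and $X$ are disjoint, but a single vertex $u\in Y'$ can simultaneously be in $O_b$, in $I_a$, and serve (or fail to serve) as a $d\to a$ witness, so the ``$st = 0$ partial sum equals the lower bound'' bookkeeping is genuinely two-dimensional here rather than one-dimensional as before, and the leading term is a sum of \emph{three} monomials rather than one or two. I would handle this by first isolating the partial sum over the configurations where all four neighbourhood sizes are $0$ or where the geometry degenerates (these reproduce exactly $2\oh^{m+n-1} + \oh^{2n}$ after inclusion–exclusion), and then splitting the complementary sum into the product of an ``$\{b\nt d\}$-part'' and a ``$\{d\nt a\}$-part'', each of which is $O(1)$ or $o(1)$ by the estimates already carried out, times the correct power of $\oh$; choosing $\alpha = \alpha(\beta)$ large enough then kills every cross term. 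The one subtlety to watch is that the $\oh^{2n}$ term and the $\oh^{2m}$-type term can be of the same order when $\beta = 1$, so the case analysis for which monomial dominates must be kept separate from the asymptotic bookkeeping, exactly as the parenthetical remark in Lemma \ref{da} does.
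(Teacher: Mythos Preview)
Your high-level plan (inclusion--exclusion lower bound, short-path upper bound with conditioning on one-step neighbourhoods, then a case split on sizes) is exactly the paper's strategy, but two of your concrete steps are wrong as written and would not go through.

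\textbf{Lower bound.} None of your events $E_1,E_2,E_3$ individually implies $\{b\nt d\}\cap\{d\nt a\}$: e.g.\ $E_1=\{\text{no edge leaves }b\}$ forces $b\nt d$ but says nothing about $d\nt a$. Hence $E_1\cup E_2\cup E_3$ is \emph{not} contained in the target event, and inclusion--exclusion on it gives no lower bound. The paper first fixes the two special edges as $a\to d$ and $d\to b$ (probability $\oh^2$, killing the length-$1$ paths), and \emph{then} takes the union of $\{O_b=O_d=\emptyset\}$, $\{O_b=I_a=\emptyset\}$, $\{I_d=I_a=\emptyset\}$; each of these, together with the fixed edge orientations, does imply $\{b\nt d\nt a\}$. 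Their probabilities are $\oh^{m+n-1}$, $\oh^{2n}$, $\oh^{m+n-1}$ respectively --- there is no $\oh^{2m}$ term anywhere in this problem, so the ``subtlety'' you flag at the end is a phantom.

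\textbf{Upper bound.} Because $a,b\in X$ and $d\in Y$, every $b$--$d$ or $d$--$a$ path has \emph{odd} length. The shortest nontrivial one is length $3$, say $b\to y\to x\to d$ with $y\in O_b\subseteq Y'$ and $x\in I_d\subseteq X'$; forbidding all such paths is the bare edge condition ``no edge from $O_b$ to $I_d$'', with probability $\oh^{|O_b|\,|I_d|}$. This is \emph{not} a witness condition: your ``no $X'$-vertex is an $(O_b,I_d)$-witness'' is vacuous, since such a witness would need an edge into $I_d\subseteq X'$ from a vertex of $X'$. The same objection kills your $(O_d,I_a)$-witness. Replacing the witness language by the correct factors $\oh^{tu}$ and $\oh^{s(m-2-u)}$ (with $U=I_d$, $V=O_d=X'\setminus U$), the sum over $u$ closes by the binomial theorem to $\bigl(\oh^{s}+\oh^{t}\bigr)^{m-2}$, after which the remaining double sum in $s,t$ is handled exactly as in Lemma~\ref{ba}; this is essentially what the paper does. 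One genuine subtlety you do not mention: having fixed $a\to d$, any $y\in O_b\cap I_a$ yields a path $b\to y\to a\to d$, so on $\{b\nt d\}$ the sets $S=I_a$ and $T=O_b$ must be \emph{disjoint} --- this is why the paper's outer sum runs over $t\le n-1-s$.
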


\begin{proof}
Let $X^\prime = X-\{a,b\}$, $Y^\prime=Y-\{d\}$.
For the lower bound, we calculate the probability $P(\{$ the edge between $b$ and $d$, and the edge between $d$ and $a$, form a directed path from $a$ to $b\}\cap (\{O_b = O_d=\emptyset\}\cup\{O_b=I_a=\emptyset\}\cup\{I_d=I_a=\emptyset\}))$, which is $2\oh^{m+n-1}+\oh^{2n}-\oh^{m+2n-3}$, by inclusion-exclusion;
$P(b\nt d\nt a) \geq 2\oh^{m+n-1}+\oh^{2n}-\oh^{m+2n-3}$.

To get a working upper bound, it is sufficient to calculate the probability of there being no path from $b$ to $d$ or from $d$ to $a$, either of length at most $3$.
Conditioning on $I_a=S,O_b=T,I_d=U,O_d=V$, there may be no edge from $T$ to $U$, nor from $V$ to $S$.
The edges $\{b,d\}$ and $\{a,d\}$ form a directed path from $a$ to $b$.
This implies that $S$ and $T$ must be disjoint.

Hence
$$P(b\nt d\nt a)$$

$$=\sum_{S,T,U,V}P(b\nt d\nt a|I_a=S,O_b=T,I_d=U,O_d=V)P(I_a=S,O_b=T,I_d=U,O_d=V)$$

$$\leq \oh^{m+2n-2}\sum_{s=0}^{n-1}{n-1\choose s}\sum_{t=0}^{n-1-s}{n-1-s\choose t}\sum_{u=0}^{m-2}{m-2\choose u}\oh^{tu+s(m-2-u)}.$$

The sum of the terms for which $s=t=0$, $t=u=0$ or $t=u-(m-2)=0$ equals the lower bound.
The remaining sum is split into the following cases:
$S_1$: $s$, $t\geq \alpha$, $S_2$: $1\leq t\leq \alpha\leq s$, $S_3: 1\leq s\leq \alpha\leq t$, and $S_4:1\leq s,t\leq \alpha$.

$S_1\leq
2^{2\alpha^2}
\oh^{2n+m-2}\sum_{s=\alpha} ^{n-1} {n-1\choose s} \sum _{t=\alpha}^{n-1-s}{n-1-s\choose t}\sum_{u=0}^{m-2}\oh^{\alpha(m-2)}=\\
\oh^{2n+m-2}3^{n-1}\oh^{(\alpha-1)(m-2)}=
o(\oh^{m+n-2}+\oh^{2n})$.

$S_2 \leq
\oh^{2n+m-2}\sum_{s=1}^{\alpha}{n-1\choose s}\sum_{t=\alpha} ^{n-1-s} {n-1-s\choose t} \sum_{u=0}^{m-2}\oh^{tu+s(m-2-u)}\leq
\oh^{2n+m-2}(n-1)^\alpha\alpha \sum_{t=\alpha}^{n-1}{n-1\choose t}\sum_{u=0}^{m-2}{m-2\choose u}\oh^{\alpha u+m-2-u}=
\oh^{2(m-2+n)}(n-1)^\alpha\alpha2^{n-1}\left(1+\oh^{\alpha-1}\right)^{m-2} = o(\oh^{m+n-1}+\oh^{2n})$.

By symmetry with $S_2$, we deduce $S_3 = o(\oh^{m+n-1}+\oh^{2n})$.

$S_4 \leq \oh^{2n+m-2}(n-1)^{2\alpha}\alpha^2 = o(\oh^{m+n-1}+\oh^{2n})$.

We conclude that 
$$
P(b\nt d\nt a) \sim 2\oh^{m+n-1}+\oh^{2n}.
$$
\end{proof}

\subsection*{(iv) $c\nt b\nt a$}

\begin{lemma}
\label{cba}
$$
P(c\nt b\nt a) \sim 3\oh^{2n}
$$
\end{lemma}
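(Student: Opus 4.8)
The plan is to follow the scheme of the preceding lemmas: a cheap lower bound coming from the obviously favourable configurations, and a matching upper bound obtained by discarding all information about directed paths of length more than $4$.

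For the lower bound, let $E_1,E_2,E_3,E_4$ be the events that, respectively, no edge is directed away from $c$, no edge towards $b$, no edge away from $b$, no edge towards $a$; each has probability $\oh^n$, and events attached to distinct vertices of $X$ are independent. Since $E_1\cup E_2\subseteq\{c\nt b\}$, $E_3\cup E_4\subseteq\{b\nt a\}$, and $E_2\cap E_3=\emptyset$ (the $n\ge2$ edges at $b$ cannot all be oriented both ways), we get $\{c\nt b\nt a\}\supseteq(E_1\cup E_2)\cap(E_3\cup E_4)=(E_1\cap E_3)\cup(E_1\cap E_4)\cup(E_2\cap E_4)$, and Bonferroni gives $P(c\nt b\nt a)\ge 3\oh^{2n}-2\oh^{3n}$, the pairwise intersections on the right being empty or of probability $\oh^{3n}$.

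For the upper bound, I would put $X'=X-\{a,b,c\}$ and condition on the sets $O_c$ of $Y$-neighbours reached from $c$ in one step, $I_b$ of $Y$-neighbours from which $b$ is reached in one step, and $I_a$; write $s=|O_c|$, $t=|I_b|$, $u=|I_a|$, and note $O_b=Y\setminus I_b$. Each configuration has probability $\oh^{3n}$; for $\{c\nt b\nt a\}$ to be possible one needs $O_c\cap I_b=\emptyset$ and $I_a\subseteq I_b$ (else there is a length-$2$ path); and, granting that, for there to be no length-$4$ path every $x'\in X'$ must be neither an $O_cI_b$-witness nor an $O_bI_a$-witness, a conditionally independent event whose single-vertex probability $f(s,t,u)$ satisfies $f(s,t,u)\le\min(\oh^s+\oh^t-\oh^{s+t},\ \oh^{n-t}+\oh^u-\oh^{n-t+u})$, the two bounds being $P(x'\text{ not an }O_cI_b\text{-witness})$ and $P(x'\text{ not an }O_bI_a\text{-witness})$, each evaluated by independence of the two disjoint edge-sets involved. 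So $P(c\nt b\nt a)\le\oh^{3n}\Sigma$ with
$$\Sigma=\sum_{t=0}^{n}\binom nt\sum_{u=0}^{t}\binom tu\sum_{s=0}^{n-t}\binom{n-t}s f(s,t,u)^{m-3},$$
and it remains to show $\Sigma=3\cdot2^n+o(2^n)$. One checks that $f(s,t,u)=1$ exactly on the three ``families'' $\{t=0\}$, $\{s=0,u=0\}$, $\{s=0,t=n\}$ (these are precisely the events $E_1\cap E_3$, $E_1\cap E_4$, $E_2\cap E_4$), whose combined contribution to $\Sigma$ is $3\cdot2^n-2$ by inclusion--exclusion on the index sets, while on every other triple $f\le\tfrac34$ (either $s,t\ge1$, making the first bound $1-(1-\oh)^2=\tfrac34$, or $s=0$, $1\le t\le n-1$, $u\ge1$, making the second).

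The crux will be estimating the non-family terms. Fixing a large constant $\alpha=\alpha(\beta)$, I would split by the size of $t$: when $t<\alpha$ or $t>n-\alpha$ one has $\binom nt\le n^\alpha$, while the inner double sum is at most $(\tfrac34)^{m-3}2^t2^{n-t}=2^n(\tfrac34)^{m-3}$, so this range contributes $o(2^n)$; when $\alpha\le t\le n-\alpha$, split the inner sum into the part with $s,u\le\alpha$ (only $O(1)$ terms, polynomial binomial coefficients, $f\le\tfrac34$), the part with $s>\alpha$ (where $\oh^s+\oh^t-\oh^{s+t}\le2\oh^\alpha$), and the part with $u>\alpha$, $s\le\alpha$ (where $\oh^{n-t}+\oh^u-\oh^{n-t+u}\le2\oh^\alpha$ since $n-t\ge\alpha$), the last two being $\le 4^n(2\oh^\alpha)^{m-3}=o(2^n)$ once $\alpha>1+1/\beta$. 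The delicate point, new compared with the two-variable estimates of Lemmas~\ref{ba}--\ref{bda}, is the free summation over $u$: bounding $\sum_u\binom tu$ by $2^t$ would introduce a spurious factor $\sum_t\binom nt 2^t=3^n$ that no constant-base saving in $f^{m-3}$ can absorb when $\beta$ is small, so one must keep the $\oh^{s+u}$-decay of $f$ active (so that, effectively, $u$ ranges over only $O(1)$ values unless $f$ is already exponentially-in-$n$ small) and must separately treat the ``corners'' $t\approx0$ and $t\approx n$, where $\binom nt$ is small but the complementary binomial sum is large. Adding the cases yields $\Sigma=3\cdot2^n+o(2^n)$, hence $P(c\nt b\nt a)\sim3\oh^{2n}$.

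As a sanity check on the constant $3$, one can instead condition on $E_1$: on $E_1$ the event is $\{b\nt a\}$ in $K_{m-1,n}$, of probability $2\oh^n(1+o(1))$ by Lemma~\ref{ba}; on $\overline{E_1}$, the sub-event lying in $E_3$ forces $c\to b$ and contributes nothing, the sub-event lying in $E_2$ forces $\{b\nt a\}=E_4$ and contributes $\oh^{2n}(1+o(1))$, and what is left, $\overline{E_1}\cap\overline{E_2}\cap\overline{E_3}$, is exactly the $s\ge1,\ 1\le t\le n-1$ portion of $\Sigma$, which the error estimate shows to be $o(\oh^{2n})$ --- so $3=2+1$.
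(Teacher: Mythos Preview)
Your proof is correct and follows the same strategy as the paper's: identical lower bound via the three ``two-empty-sets'' events, identical upper bound by forbidding length-$\le4$ witnesses in $X'=X\setminus\{a,b,c\}$, and the same style of case-splitting on the sizes of the conditioned sets to show the non-principal terms are $o(2^n)$. The only cosmetic differences are that you bound the single-vertex non-witness probability $f$ by $\min\bigl(P(\text{not an }O_cI_b\text{-witness}),\,P(\text{not an }O_bI_a\text{-witness})\bigr)$ rather than working out the exact inclusion--exclusion value $\oh^{|I_b|}+\oh^{|O_b|}-\oh^{|O_c|+|I_b|}-\oh^{|I_a|+|O_b|}+\oh^{|I_a|+|O_c|}$ that the paper derives, and that you split cases first by $|I_b|$ rather than by $|I_a|,|O_c|$; neither affects the argument.
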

\begin{proof}

For the lower bound, note that $P(c\nt b\nt a) \geq P(\{O_c = O_b = \emptyset\}\cup\{O_c = I_a = \emptyset\}\cup\{I_b = I_a= \emptyset\}) \geq 3\oh^{2n} -2\oh^{3n}$.

For the upper bound, we will sum over $U\subseteq Y^\prime$, $V=Y^\prime-U$, $S\subseteq U$, and $T\subseteq V$.
When doing so, an expression for the probability that a given vertex $x\p \in X\p$ is not a $TU$-witness and not a $VS$-witness is needed.
The probability of the complementary event is the probability of $x\p$ being a $TU$-witness or a $VS$-witness.
The separate probabilities for these last two events are $P(x\p$ is a $TU$-witness$) = \left(1-\oh^{|T|}\right)\left(1-\oh^{|U|}\right)$ and $P(x\p$ is an $SV$-witness$) = \left(1-\oh^{|S|}\right)\left(1-\oh^{|V|}\right)$. The probability of their intersection is $\left(1-\oh^{|S|}\right)\left(1-\oh^{|T|}\right)$, using $S\subseteq U$, $T\subseteq V$, 
By inclusion-exclusion, we get 
$P(x\in X^\prime \text{ is not a } TU \text{-witness, nor a } VS\text{-witness})= 
1-\left(1-\oh^{|T|}\right)\left(1-\oh^{|U|}\right)-\left(1-\oh^{|S|}\right)\left(1-\oh^{|V|}\right)+  \left(1-\oh^{|S|}\right)\left(1-\oh^{|T|}\right)$, which simplifies to
$\oh^{|U|}+\oh^{|V|}-\oh^{|T|+|U|}-\oh^{|S|+|V|}+\oh^{|S|+|T|}$. 

$$P(c\nt b\nt a) $$

$$=\sum_{S,T,U,V}P(c\nt b\nt a|I_a = S, O_c = T, I_b = U, O_b = V)P(I_a = S,O_c = T,I_b = U,O_b = V)$$

$$\leq\sum_{S,T,U,V} P(\text{no } x\in X^\prime \text{ is a } TU \text{-witness, nor a } VS\text{-witness})\oh^{3n}$$

$$=\oh^{3n}\sum_{u=0, u+v=n}^{n}{n\choose u}\sum_{s=0}^u{u\choose s}\sum_{t=0}^v{v\choose t}\left(\oh^u+\oh^v-\oh^{t+u}-\oh^{s+v}+\oh^{s+t}\right)^{m-3}.$$

The sum of the terms with $s = t = 0$ or $u=0$ or $v=0$ equals the lower bound.
The other terms sum to $o(\oh^{2n})$, as we now turn to show.
Since $(u,t)$ and $(v,s)$ are interchangeable, we need only consider the following cases:
$S_1$: $s$, $t\geq \alpha$;
$S_2$: $1\leq s\leq\alpha \leq t,u$;
$S_3$: $1\leq s\leq\alpha\leq t$, $1\leq u\leq \alpha$;
$S_4$: $1\leq s,t\leq\alpha$.

$$
S_1 \leq
\oh^{3n}
\sum_{u=\alpha, u+v=n}^{n-\alpha}{n\choose u}\sum_{s=\alpha}^u{u\choose s}\sum_{t=0}^v{v\choose t}\left(\oh^u+\oh^v-\oh^{t+u}-\oh^{s+v}+\oh^{s+t}\right)^{m-3} 
$$

$$
\leq\oh^{3n}\sum_{u=0, u+v=n}^n{n\choose u}\sum_{s=0}^u{u\choose s}\sum_{t=0}^v{v\choose t}\oh^{(\alpha-2)(m-3)} = O(\oh^{(\beta(\alpha-2)+1)n}),$$ which is $o(\oh^{2n})$ when choosing $\alpha$ large enough (e.g. $\alpha > 2(1+1/\beta)$).

Note that $t\geq\alpha\Rightarrow v\geq\alpha$.

$S_2\leq \oh^{3n}\sum_{u=\alpha}^{n-\alpha}{n\choose u}\sum_{s=1}^\alpha n^\alpha \sum_{t=\alpha}^v {v\choose t}\left(3\oh^\alpha\right)^{m-3} = o(\oh^{2n})$ for large enough $\alpha$.

$S_3\leq \oh^{3n}\sum_{u=1}^{\alpha}n^{2\alpha}\sum_{s=1}^\alpha \sum_{t=\alpha}^v {v\choose t}\left(\oh^\alpha+\oh+\oh^\alpha\right)^{m-3}=o(\oh^{2n})$ for large enough $\alpha$.

$S_4 \leq \oh^{3n}\alpha^3 n^{3\alpha} = o(\oh^{2n})$.

Consequently $P(c\nt b\nt a)- 3\oh^{2n} = o(\oh^{2n})$.\qedhere
\end{proof}

\subsubsection*{(v) $d\nt b\nt a$}

\begin{lemma}
\label{dba}
$P(d\nt b\nt a) \sim \oh^{m+n-2}+\oh^{2n}$.
\end{lemma}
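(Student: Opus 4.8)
The plan is to follow the template used for Lemmas \ref{ba}--\ref{cba}: a lower bound by inclusion--exclusion over a handful of ``obvious'' obstructions, an essentially matching upper bound obtained from the probability that no short directed path exists, and a split-into-cases estimate of the error terms. Set $X' = X - \{a,b\}$ and $Y' = Y - \{d\}$, so $m' = m-2$ and $n' = n-1$. Two structural facts I would use throughout: in the bipartite graph every directed path from $d$ to $b$ has odd length and every directed path from $b$ to $a$ has even length; and on the event $\{d\nt b\nt a\}$ the edge between $b$ and $d$ is forced to be oriented $b\to d$ (else $d\to b$), and then the edge between $d$ and $a$ is forced to be oriented $a\to d$ (else $b\to d\to a$ gives $b\to a$) -- these two forced orientations supply the factor $\oh^2$ appearing everywhere below.

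For the lower bound I would intersect $\{d\nt b\nt a\}$ with, and then take the union of, the three events: (i) the two forced edges oriented as above, $d$ having no out-neighbour in $X'$, and $b$ having no out-neighbour in $Y'$; (ii) the same but with ``$a$ has no in-neighbour in $Y'$'' in place of the last condition; (iii) $b$ has no in-neighbour in $Y$ and $a$ has no in-neighbour in $Y$. On (i) and (ii) the vertex $d$ is a sink and $b$ reaches only $d$, respectively $a$ is a source; on (iii) both $a$ and $b$ are sources; so all three lie inside $\{d\nt b\nt a\}$. Their probabilities are $\oh^{m+n-1}$, $\oh^{m+n-1}$ and $\oh^{2n}$; the intersection of (i) and (ii) has probability $\oh^{m+2n-2}$, whereas (i)$\cap$(iii) and (ii)$\cap$(iii) are empty (contradictory orientations of the $b$--$Y'$, resp. $a$--$Y'$, edges). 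Since $m = \lfloor\beta n\rfloor\to\infty$, inclusion--exclusion gives $P(d\nt b\nt a)\ge 2\oh^{m+n-1}+\oh^{2n}-o(\oh^{m+n-2}+\oh^{2n}) = \oh^{m+n-2}+\oh^{2n}-o(\oh^{m+n-2}+\oh^{2n})$.

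For the upper bound I would bound $P(d\nt b\nt a)$ by the probability that there is no $d$--$b$ path of length at most $3$ and no $b$--$a$ path of length at most $4$. Conditioning on the two forced edges and on $I_a, O_b$ (hence $I_b = Y'-O_b$) inside $Y'$ and $O_d$ (hence $I_d$) inside $X'$, I may restrict to $I_a\cap O_b=\emptyset$ (else the length-$2$ path $b\to y\to a$ exists); a length-$3$ path $d\to b$ is then $d\to x\to y\to b$ with $x\in O_d$, $y\in I_b$, while the two kinds of length-$4$ path $b\to a$ are $b\to d\to x\to y\to a$ with $x\in O_d$, $y\in I_a$, and $b\to y_1\to x\to y_2\to a$ with $x$ a witness between $O_b$ and $I_a$. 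Since $I_a\subseteq I_b$, the only constraint on a vertex $x\in O_d$ is that it have no out-edge into $I_b$ (probability $\oh^{|I_b|}$), and the constraint on $x\notin O_d$ is that it not be a witness between $O_b$ and $I_a$ (probability $\oh^{|O_b|}+\oh^{|I_a|}-\oh^{|O_b|+|I_a|}$). Multiplying these over $x\in X'$ and summing over $O_d$, with $s=|I_a|$ and $t=|O_b|$, gives
$$P(d\nt b\nt a)\le\oh^{m+2n-2}\sum_{t=0}^{n-1}\binom{n-1}{t}\sum_{s=0}^{n-1-t}\binom{n-1-t}{s}\left(\oh^{n-1-t}+\oh^{t}+\oh^{s}-\oh^{s+t}\right)^{m-2}.$$
The terms with $t=0$ or $s=0$ reproduce the lower bound up to lower-order corrections -- obstructions (i) and (ii) show up as the slices $t=0$ and ($s=0$ with $t$ in the bulk), and obstruction (iii) as ($s=0$, $t=n-1$) -- and the remaining terms ($s,t\ge 1$) I would split as in the earlier proofs into $S_1$: $s,t\ge\alpha$; $S_2$: $1\le t\le\alpha\le s$; $S_3$: $1\le s\le\alpha\le t$; $S_4$: $1\le s,t\le\alpha$, bounding $\oh^{n-1-t}+\oh^{t}+\oh^{s}-\oh^{s+t}$ in each case by a constant $<1$ in whichever regime the binomial coefficients are large, so that its $(m-2)$-th power makes the product with the binomial sums and the $\oh^{m+2n-2}$ prefactor negligible; choosing $\alpha$ large enough in terms of $\beta$ makes each $S_i = o(\oh^{m+n-2}+\oh^{2n})$, which finishes the proof.

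I expect the bookkeeping of the upper bound to be the main obstacle: three reachability lengths are in play and several families of short paths pin down the same $X'$--$Y'$ edges, so the relevant probabilities do not factor, and one must -- as in Lemma \ref{bda} -- identify exactly which partial sums reproduce which obstruction (in particular, both an $\oh^{m+n-1}$ term and the $\oh^{2n}$ term sit inside the single slice $I_a=\emptyset$), as well as choose the splitting constant $\alpha$ as a function of $\beta$ so that the estimates hold for every $\beta$.
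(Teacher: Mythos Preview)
Your approach is essentially the same as the paper's: the same three obstruction events for the lower bound, the same conditioning on $I_a\subseteq I_b$, $O_b=Y'\setminus I_b$, $O_d$ for the upper bound, and after summing over $O_d$ by the binomial theorem you reach exactly the paper's formula (your $t$ is the paper's $v=n-1-u$). The identification of the main slices and the split-and-bound scheme for the error terms are also the same in spirit, though the paper uses five cases (splitting additionally on whether $u=n-1-t$ is small) where you use four; your coarser split still works because the constraint $s\le n-1-t$ forces $u\ge\alpha$ whenever $s\ge\alpha$.

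One small slip: the intersection (ii)$\cap$(iii) is \emph{not} empty. Event (ii) forces $I_a=\emptyset$ in $Y'$, and event (iii) forces $I_a=\emptyset$ in $Y$; these are compatible, not contradictory. (It is (i)$\cap$(iii) that is genuinely empty, via the $b$--$Y'$ edges.) The intersection (ii)$\cap$(iii) has probability $\oh^{m+2n-2}$, which is $o\!\left(\oh^{m+n-2}+\oh^{2n}\right)$, so your lower-bound conclusion survives unchanged once you correct the justification.
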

\begin{proof}
Let $X^\prime=X-\{a,b\}$, $Y^\prime=Y-\{d\}$.

As before, we have the simple lower bound:
$P(d\nt b\nt a) \geq P(\{O_d=O_b=\emptyset\}\cup\{O_d=I_a=\emptyset\}\cup\{I_b=I_a=\emptyset\}) \geq \oh^{m+n-2}+\oh^{2n}-\oh^{m+2n-3}$.

We bound the probability from above by the probability of there being no path from $d$ to $b$ or from $b$ to $a$ of length at most $3$ or $4$ respectively.
The edges $\{a,d\}$ and $\{b,d\}$ are both directed towards $d$.
Condition on $O_d = T$, $I_b = U$, $I_a=S$, $O_b = V$.
No edge is directed from $T$ to $U$, and $S\subseteq U$.
These conditions imply that no $x\in S$ is a $VU$-witness.
In addition we forbid any $x\in X\p-T$ to be a $VS$-witness.
The events '$x$ is a $VS$-witness' are independent for $x\in X\p-T$ and independent of the other necessary events just stated.
We obtain

$$P(d\nt b\nt a)$$

$$= \sum_{S,T,U,V}P(d\nt b\nt a|O_d=T,I_b=U,O_b=V, I_a = S)P(O_d=S,I_b=T,I_a=U, I_a=S)$$

$$\leq \oh^{2n+m-2}\sum_{t=0}^{m-2}{m-2\choose t}\sum_{u=0}^{n-1}{n-1\choose u}\sum_{s=0}^u{u\choose s}\oh^{st}\left(\oh^s+\oh^v-\oh^{s+v}\right)^{m-2-t}$$
$$= \oh^{2n+m-2}\sum_{u=0}^{n-1}\sum_{s=0}^u {n-1\choose u}{u\choose s} \left(\oh^u+\oh^v+\oh^s-\oh^{s+v}\right)^{m-2}.$$

For $s = 0$ we obtain the following sum:
$\oh^{2n+m-2}\sum_{u=0}^{n-1}{n-1\choose u}\left(\oh^u+1\right)^{m-2}=
\oh^{2n+m-2}\left(2^{m-2}+(n-1)^\alpha\alpha\left(\frac{3}{2}\right)^{m-2}+\left(1+\oh^\alpha\right)^{m-2}\right)=
\oh^{2n}+(n-1)^\alpha\alpha\oh^{2n}\left(\frac{3}{2}\right)^{m-2}+\oh^{2n+m-2}\left(1+\oh^\alpha\right)^{m-2} = \oh^{2n}+o(\oh^{m+n-2}+\oh^{2n})$.

For $v=0=n-1-u$:
$\oh^{2n+m-2}\sum_{s=0}^{n-1}{n-1\choose s}\left(1+\oh^{n-1}\right)^{m-2} = 
\oh^{m+n-2}\left(1+\oh^{n-1}\right)^{m-2} = \Theta(\oh^{m+n-1})$, as
$\lim_{n\to\infty} \left(1+\oh^{n-1}\right)^{m-2}=1$.

Split the remaining sum into the following cases: 
$S_1: 1\leq u\leq \alpha$,
$S_2: 1\leq s \leq \alpha \leq u,v$,
$S_3: \alpha \leq s,u,v$,
$S_4: 1\leq s,v \leq \alpha$,
$S_5: 1\leq v\leq \alpha \leq s$.

Clearly, $S_1 = o(\oh^{m+2n-2} + \oh^{2n})$.

$S_2 \leq 
n^\alpha\oh^{m+2n-2}\sum_{u=\alpha}^{n-\alpha}{n-1\choose u}\left(2\oh^\alpha+\oh^s\right)^{m-2} = o(\oh^{m+2n-2}+\oh^{2n})$, for $\alpha > 21$, which is easily seen by considering $\beta \leq 1$ and $\beta > 1$ separately.

$S_3 \leq \oh^{m+2n-2}\sum_{u=\alpha}^{n-\alpha}{n-1\choose u}\sum_{s=\alpha}^u {u\choose s} \left(3 \cdot \oh^\alpha\right)^{m-2} \leq 
\oh^{m+2n-2}\left(\frac{3}{2^\alpha}\right)^{m-2}\cdot 3^{n-1} = o(\oh^{m+n-2}+\oh^{2n})$.

$S_4 = o(\oh^{m+2n-2} + \oh^{2n})$, since $S_4$ is the sum of a constant ($\alpha^2$) number of $o(\oh^{m+2n-2}+\oh^{2n})$ terms.

$S_5 \leq \alpha n^\alpha \oh^{m+2n-2} \sum_{s=\alpha}^{n-1} {n-1\choose s}\left(\oh^\alpha+\oh^v+\oh^\alpha\right)^{m-2} = \alpha n^\alpha \oh^{m+n}\left(\oh+\oh^{\alpha-1}\right)^{m-2} = o(\oh^{m+2n-2} + \oh^{2n})$ for $\alpha > 2$.

Finally, 
$$P(d\nt b\nt a) \sim \oh^{m+n-2}+\oh^{2n}.$$

\end{proof}

We now show how to use the lemmas above to prove Theorem 1. For example, suppose $\beta < 1$, $x,y\in X$, and $z\in Y$.
Then
$$RC_{m,n} = 1 - \frac{P(x\nt y)P(y\nt z)}{P(x\nt y\nt z)} =$$
$$1 - \frac{P(a\nt b)P(b\nt d)}{P(a\nt b \nt d)} = 
1 - \frac{P(b\nt a)P(d\nt a)}{P(d\nt b\nt a)} = (\textrm{by lemmas \ref{dba}, \ref{ba} and \ref{da}})
$$

$$1 - \frac{2\oh^n \left(\oh^m +\oh^n\right)}{4 \oh^{m+n} + \oh^{2n}}.$$

Now, since $\beta < 1$ we get
$$
\lim_{n\to\infty} RC_{m,n} = \lim_{n\to\infty} 1 - \frac{2(1-\oh^{n-m})}{4+\oh^{n-m}} = \frac{1}{2}.
$$
The other entries in the table in the statement of the theorem can be found similarly.

\section{Further questions}

I would like to mention two questions:

Question 1:

How do the results above change if an edge $e = \{x,y\}$, where $x\in X$ and $y\in Y$, rather than being oriented either way with equal probability, is directed from $X$ to $Y$ with some fixed probability $p$?

Question 2:

From Theorem 1, it seems plausible that $\{x\nt y\}$ and $\{y\nt z\}$ should be negatively correlated in any complete bipartite graph (at least in any large enough graph) when $x,y,z$ belong to the same color class.
This seems not to be the case, however; computer calculations show that if $n$ is much larger than $m$ (on the scale $n = 2^m$), then the events mentioned seem to be positively correlated even for fairly large $n$.
Is this true in general?
Can the cases with positive correlation be completely identified in terms of $m$ and $n$?

The calculations mentioned above made use of the following recursions:
$$
f_X(m,n,k)=\sum_{l=0}^n{n\choose l}\frac{(2^k-1)^l}{2^{nk}}f_Y(m-k,n,l),
$$

$$
f_Y(m,n,l) = \sum_{k = 0}^{m - 1}{m - 1\choose l}\frac{(2^k - 1)^l}{2^{mk}}f_X(m,n - l,k),
$$

$$
g_X(m,n,k) = \sum_{l=0}^n{n\choose l}\frac{(2^k-1)^l}{2^{nk}}g_Y(m-k,n,l),
$$

$$
g_Y(m,n,l)= \sum_{k=0}^{m-2}{m-2\choose k}\frac{(2^l-1)^k}{2^{lm}}g_X(m,n-l,k),
$$

$$
h_X(m,n,k) = \sum_{l=0}^{n-1}{n-1\choose l}\frac{(2^k-1)^l}{2^{nk}}h_Y(m-k,n,l),
$$

and

$$
h_Y(m,n,l) = \sum_{k=0}^{m-1}{m-1\choose k}\frac{(2^l-1)^k}{2^{mk}}h_X(m,n-l,k).
$$

where $f_X$, $f_Y$, $g_X$, $g_Y$, $h_X$ and $h_Y$ are defined as follows.

Let $P_{m,n}$ denote the probability measure associated with a uniformly chosen orientation of $K_{m,n}$, where the class $X$ has size $m$ and the class $Y$ has size $n$. For a subset $K$ of vertices of $K_{m,n}$ and a vertex $x$ in $K_{m,n}$, let $\{K \nt x\} = \bigcap_{k\in K} \{k \nt x\}$. Let $a,b,c\in X$,$d\in Y$ and $K$ be any subset of $X$, not including $a$ or $b$, of size $k$ and $L$ be any subset of $Y$, not including $d$, of size $l$.

Then $f_X(m,n,k) = P_{m,n}(K \nt a)$, $f_Y(m,n,l) = P_{m,n}(L\nt a)$, $g_X(m,n,k) = P_{m,n}(K \nt b$ and $b \nt a)$, $g_Y(m,n,l) = P_{m,n}(L\nt b$ and $b \nt a)$, $h_X(m,n,k)= P_{m,n}(K\nt d$ and $d \nt a)$, and $h_Y(m,n,l) = P_{m,n}(L \nt b$ and $b \nt a)$.

In addition, we have the following base cases for the formulas above:
$g_X(m,n,0) = g_Y(m,n,0) = f_X(m,n,1)$, $h_X(m,n,0) = h_Y(m,n,0) = f_Y(m,n,1)$, and $f_X(m, n, 0) = f_Y(m, n, 0) = 1$.

These functions are related to the quantities estimated in the lemmas above by
$f_X(m,n,1) = P_{m,n}(a\nt b)$, $f_Y(m,n,1) = P_{m,n}(a\nt d)$, $g_X(m,n,1) = P_{m,n}(c\nt b\nt a)$, $g_Y(m,n,1) = P(d\nt b\nt a)$, and $h_X(m,n,1) = P(c\nt d \nt a)$.

\section*{Acknowledgement}
I thank Svante Linusson for suggesting the problem and for helpful suggestions on earlier versions of the manuscript.

Department of Mathematics, KTH - Royal Institute of Technology, SE-100 44 Stockholm, Sweden.

E-Mail: {\tt eaas@math.kth.se.}

\end{document}